\title[Mild HJB equations for stable noise in infinite dimensions]{Mild solutions of HJB equations associated with cylindrical stable L\'evy noise in infinite dimensions}
\author[Bondi]{Alessandro Bondi}
\address[Alessandro Bondi]
{Department of AI, Data and Decision Sciences, Luiss University, Rome, Italy}
\email{abondi@luiss.it}
\author[Gozzi]{Fausto Gozzi}
\address[Fausto Gozzi]
{Department of AI, Data and Decision Sciences, Luiss University, Rome, Italy}
\email{fgozzi@luiss.it}
\author[Priola]{Enrico Priola}
\address[Enrico Priola]{Department of Mathematics, University of Pavia, Pavia, Italy}
\email{enrico.priola@unipv}
\thanks{The third author is a member of GNAMPA}
\author[Zabczyk]{Jerzy Zabczyk}
\address[Jerzy Zabczyk]{Institute of Mathematics, Polish Academy of Sciences, Warsaw, Poland}
\email{jerzy@zabczyk.com}
\keywords{Hamilton-Jacobi-Bellman equations, stochastic PDEs with jumps, stochastic optimal control, dynamic programming, cylindrical stable L\'evy processes}
\subjclass[2020]{93E20, 35R15, 60G52 (primary); 60H15 (secondary)}
\date{\today}
\newcommand{\upperromannumeral}[1]{\MakeUppercase{\romannumeral #1}}
\newtheorem{theorem}{Theorem}[section]
\newtheorem{lemma}[theorem]{Lemma}
\newtheorem{hypothesis}[theorem]{Hypothesis}
\newtheorem{remark}[theorem]{Remark}
\def\H{\mathcal H}
\def\P{{\mathbb P}}
\def\R{{\mathbb R}}
\def\E{{\mathbb E }}
\definecolor{gre}{rgb}{0.03,0.50,0.03}
\definecolor{darkviolet}{rgb}{0.58, 0.0, 0.83}
\def\hh{{\vskip 2mm \noindent }}
\begin{document}

\maketitle

\begin{center}
\emph{Dedicated to the memory of Giuseppe Da Prato}  
\end{center}

\begin{abstract}
	We study the optimal control of an infinite-dimensional stochastic system governed by an SDE in a separable Hilbert space driven by cylindrical stable noise. We establish the existence and uniqueness of a mild solution to the associated HJB equation. This result forms the basis for the proof of the Verification Theorem, which is the subject of  ongoing	research and will provide a sufficient condition for optimality.
\end{abstract}

\bigskip

\tableofcontents

\section{Introduction}

Our  study is concerned with a  {stochastic} control system
\begin{equation}\label{intro1}
	dX_s  = (A X_s + F(X_s)) ds  + a_s ds
	+   dZ_s, \quad  s \ge t,  \; X_t  = x \in  H,
\end{equation}
on a Hilbert space $H$ where $A$ is a linear operator and  $F$ a Lipschitz  continuous and bounded transformation from $H$ into $H$. Moreover, $(a_s)_{s\ge 0}$ is a predictable control process with values in  {the closed ball $\{z\in H : |z|\le R\}$}. Perturbations are modeled by a stochastic process $Z$
of pure  jump L\'evy type. The ultimate goal is to find a control process which minimizes the cost functional
\begin{equation*}
	J(t,x,a) = \E \bigg [ \int_t^T \Big(g(X_s^{t,x,a}) + \frac{1}{2} |a_s|^2\Big)ds  \; + \; h(X_T^{t,x,a}) \bigg],
\end{equation*}
where $X^{t,x,a}_s,\,s\in[t,T],\,x\in H,$ is a solution of \eqref{intro1} corresponding to the control $(a_s)_{s\ge 0}.$

To solve the control problem we apply the dynamic programming approach, with the nonlocal parabolic Hamilton-Jacobi-Bellman equation
\begin{align}
	\begin{aligned}\label{intro2}
		&	\begin{aligned}
			\partial_t u(t,x) = &g(x) +
			\langle Ax+F(x), Du(t,x)\rangle
			\\&	+\int_{H}\{
			u(t,x+y)-u(t,x)-\langle
			Du(t,x), y
			\rangle
			\}\nu (dy)
			\\&+\inf_{|\lambda|\le R} \Big[ \langle \lambda, Du(t,x)\rangle +\frac{1}{2}|\lambda|^2\Big]
			,\quad  t \in ]0,T],
		\end{aligned}
		\\&
		u(0,x) = h(x),\quad  x \in H,
	\end{aligned}
\end{align}
for the value function
\[
V(t,x) = \inf_{a \in {\mathcal U}} J(t,x,a)
\]
playing a central role. In Equation \eqref{intro2}, $\nu$ is the so-called intensity measure of the process $Z$; $\mathcal{U}$ denotes the set of all control processes.

The specific results will be formulated under the additional assumption that  {$A$ is an unbounded, negative definite, self-adjoint operator on $H$ having inverse $A^{-1}$ which is compact. This allows to cover the case when $Z$ is a  cylindrical $\alpha-$stable process with $\alpha\in(1,2)$ formally given by
\[
	Z_t = \sum_{n \ge 1}  Z_t^n e_n,\quad  t \ge 0,
\]
where $(e_n)_n$ is an
orthonormal basis of eigenfunctions of $A$ and
$(Z^n_t)_n$ a sequence of independent one-dimensional $\alpha-$stable L\'evy
processes (see Section \ref{sec2} for more details)}. These processes and the corresponding semilinear SPDEs are introduced in \cite{PZ11} and further investigated in, for instance, \cite{PSXZ}.

Note that the study of a control problem like ours has been initiated in \cite{CDP}  (see also the later paper \cite{Gozzi95}) in the  well-known cylindrical Wiener case, i.e., when $(Z^n_t)_n$ are independent one-dimensional Wiener processes.

The solution  will be achieved in two steps. \\
The first one is a proof of the existence and uniqueness of the so-called mild version of Equation \eqref{intro2} (see Equation \eqref{intro4} below). This is done in the present paper. \\ {The second step will be the subject of a future research, which will focus on the  proof of the \emph{fundamental formula}}:
\begin{align}\label{fond_intro}
	\begin{aligned}
		u(T-t,x)=J(t,x,a)
		+\mathbb{E}\bigg[&
		\int_{t}^{T}\!\bigg(\!
		\inf_{|\lambda|\le R} \Big[ \langle \lambda, Du(T-s,X^{t,x,a}_s)\rangle +\frac{1}{2}|\lambda|^2\Big]
		\\
		&	-\frac{1}{2}|a_s|^2\!-\!
		\langle Du(T-{s},X^{t,x,a}_{s}),    a_{s}  \rangle\bigg) ds\bigg]
		,
	\end{aligned}
\end{align}
valid for the mild solution $u$ of \eqref{intro2} and an arbitrary control process $(a_s)_{s\ge0}.$
It leads directly to the solution of the control problem. \\
{It is worth noting that our approach relies on the smoothing effect and gradient estimates of the transition semigroup of the Ornstein-Uhlenbeck (OU, for short) process associated with the random perturbation
	$Z$. This allows us to avoid the theory of viscosity solutions, which is particularly delicate for infinite-dimensional pure jump Lévy processes. Such theory requires restrictive conditions on the drift $F$ (see \cite{SZ0, SZ1, SZ2}). Moreover, it does not  cover the cylindrical Lévy case we consider. We also mention that it is currently an open problem to establish the regularity of such viscosity solutions.

	For another example of  infinite-dimensional pure jump Lévy process where the strong Feller and regularizing properties of the corresponding OU  semigroup are known, we refer to \cite{BO_iso}; see also Remark \ref{rem_intro} for more details.}

To define the mild version of \eqref{intro2}, denote by $P_t,\,t\ge 0,$ the transition semigroup of the generalized OU process $Z_A$ (cf. \cite{PesZ}):
\begin{equation}\label{intro3}
	dZ_{A,t}=AZ_{A,t}dt + d Z_t,\quad Z_{A,0}=x.
\end{equation}
For $\phi$ in the space of bounded continuous functions $C_b(H)$:
\[
P_t\phi(x)=\mathbb{E}\left[\phi(Z^x_{A,t})\right],\quad t\ge0,\,x\in H,
\]
where $Z^x_{A}$ is the mild solution of \eqref{intro3}. The mild version of \eqref{intro2} is of the form
\begin{equation}\label{intro4}
	u(t,x) = P_t h (x) + \int_0^t P_{t-s} [\H (\cdot ,
	D u(s, \cdot ))]  (x)\, ds ,\quad t
	\in [0,T],\; x \in H,
\end{equation}
where, for arbitrary $y\in H$ and $p\in H$,
\[
\mathcal {H} (y,p) =  g(y)+ \langle F(y), p \rangle +  \inf_{|\lambda|\le R} \Big[\langle \lambda, p \rangle
+ \frac{1}{2} |\lambda|^2 \Big].
\]
{  The existence and uniqueness of a regular  solution $u$ to \eqref{intro4} in the space $C^1_\gamma(H)$  {(see \eqref{eq:defC1gamma} for its definition)} is proved in Theorem \ref{thm_wp}. In particular,
such solution $u$ is  Fréchet differentiable in the space variable $x$.
In Theorem \ref{rit}
 we show in addition that the Fréchet derivative $Du(t, \cdot)$  is  $\theta-$H\"older continuous from $H$ into $H$ for suitable  $\theta \in (0,1)$, $t \in ]0,T]$. These theorems are
 the main results of this   paper  and are
crucial for the proof of the fundamental formula \eqref{fond_intro}.}

 Even in the finite-dimensional setting, contrary to the Wiener case, which is extensively analyzed, for instance, in the monograph \cite{Kry}, the theory of optimal
stochastic control problems with random perturbations of L\'evy type is not very developed (especially for the case of multiplicative L\'evy noises). {We refer to  \cite{touzi, ISHI, praga} for Bellman's principles involving special dynamics and to the more recent \cite{BP} for a dynamic programming principle associated with
more
 general controlled jump diffusions. We also mention the book \cite{OKS}, where several examples of control problems for  jump diffusions motivated by applications can be found.}

Control systems in infinite dimensions with Wiener-type perturbations are of current interest and discussed in many publications, see, in particular, the comprehensive monograph \cite{FGS17}.
\\
Besides the already mentioned \cite{SZ1,SZ2}, however, we are unaware of works on stochastic infinite-dimensional control systems with pure L\'evy-type perturbations without Gaussian component. They require essential modifications of the classical techniques, although basic dynamic programming ideas are still applicable.

{\begin{remark}\label{rem_intro}
	The paper  \cite{BO_iso} studies  regularizing properties and establishes gradient estimates for the OU transition semigroup corresponding to subordinated cylindrical Wiener processes $W_S$ formally given by
	\[
		W_{S_t}=\sum_{n \ge 1}B^n_{S_t}e_n,\quad t\ge0.
	\]
	Here, $(B_t^n)_n$ is a sequence of one-dimensional independent Brownian motions and $S$ is an independent $\alpha-$stable subordinator, with $\alpha\in (\frac12,1)$. The perturbation $W_S$ is $2\alpha-$stable and, unlike $Z$, is isotropic, i.e., invariant by rotation. Employing the gradient estimates in \cite{BO_iso}, the machinery devised in this paper can be adapted to analyze a state equation like \eqref{intro1} driven by $W_S$ instead of $Z$.
\end{remark}}

\section{Preliminary material}\label{sec2}

{\color{black} In this section, we recall some properties  of cylindrical $\alpha-$stable L\'evy processes from \cite{PZ11}. Moreover, in Subsection \ref{sub_state} we introduce the state equation of the control problem investigated in the following sections.
\subsection{On stable L\'evy processes in Hilbert spaces}\label{sub_prel}

 Let $H$ be a   real separable Hilbert space.
Given a stochastic basis $(\Omega, {\mathcal F}, ({\mathcal F}_t), \P)$
 satisfying the usual assumptions,
we consider a cylindrical  $\alpha-$stable process $Z=(Z_t)_t$,
 $\alpha \in (1,2)$, formally given by
$$
 Z_t = \sum_{n \ge 1} \beta_n Z_t^n e_n,\quad  t \ge 0.
$$
 Here,  $(e_n)_n$ is a    fixed
 orthonormal basis in $H$,
  $(\beta_n)_n\subset (0,\infty)$ is a sequence of {positive} numbers and
$(Z^n_t)_n$ is  a sequence of independent one-dimensional $\alpha-$stable L\'evy
 processes   defined on the previous
stochastic basis such that, for any $n \in {\mathbb N}$ and $t \ge 0$,
\begin{equation}\label{esagera}
	\E [ e^{i Z^n_t h}] = e^{- t|h|^{\alpha}},\quad  h \in \R.
\end{equation}
Let $A\colon D(A) \subset H \to H$ be an operator that fulfills the next assumptions taken from \cite{PZ11}.
\begin{hypothesis} \label{ciao1}
The operator $A$ is self-adjoint. Moreover, the following holds.
\begin{enumerate}[(i)]
 \item The reference basis $(e_n)_n$ is a basis of eigenvectors of $A$. More specifically, $(e_n)_n\subset D(A)$ and there exists a sequence $(\gamma_n)_n\subset (0,\infty)$ of positive numbers such that  $\gamma_n \to
 \infty $ as $n\to\infty$ and $$Ae_n=-\gamma_ne_n,\quad n\in \mathbb{N}.$$

\item There exists  $\gamma \in [1/\alpha,1) $ and
  $\bar{C} >0$ such that
 \begin{equation} \label{pu1}
 \beta_n \ge \bar{C} \,
 \gamma_n ^{\frac{1}{\alpha} - \gamma},\quad  n \in\mathbb{N}.
\end{equation}
\item The series  $\sum_{n \ge 1} {\beta_n^{\alpha} }
 \gamma_n^{-1} $ converges.
 \end{enumerate}
\end{hypothesis}
{ Note that when  $\sum_{n \ge 1}
 \gamma_n^{-1} $ converges we can  cover in particular  the cylindrical case $\sum_{n \ge 1}  Z_t^n e_n$. This happens, for instance, when $A $  is the generator of the one-dimensional heat semigroup on a bounded interval  with Dirichlet boundary conditions.}

For every $x\in H$,  the OU process $Z_A^{x}=(Z_{A,t}^{x})_t$ associated with $Z$ is defined by
\[
	Z^x_{A,t}=e^{tA}x +\sum_{n=1}^{\infty} \beta_n\int_{0}^{t}e^{-\gamma_n(t-s)}dZ^n_s,\quad t\ge 0.
\]
Thanks to Hypothesis \ref{ciao1}, by \cite[Proposition 4.2]{PZ11}, $Z_A^x$ is an $H-$valued process. Moreover, by \cite[Theorem 4.4]{PZ11}, we can consider a version of $Z_A^x$ which   is  stochastically continuous, predictable and with $p-$locally integrable trajectories, for every $p\in [1,\alpha).$ We observe that for our arguments we do not  need the c\`adl\`ag regularity for the paths of the OU processes. \hh
We denote by $C_b(H)$ [resp., $C_b(H;H)$] the Banach space of bounded and continuous real-valued [resp., $H-$valued] functions defined in $H$, endowed with the usual norm $\|\cdot\|_0$.  Additionally, $C^1_b(H)$ is the Banach space of continuous,  bounded and Fr\'echet
differentiable functions from $H$ into $\R$ with continuous and bounded Fr\'echet derivative, endowed with the usual norm $\|\!\cdot\!\|_1$.
Let $P=(P_t)_{t\ge 0}$ be the OU transition semigroup associated with the processes $Z_A^x,\,x\in H,$ i.e.,
\begin{equation}\label{def_semi}
	P_t\phi(x)=\mathbb{E}\left[\phi(Z^x_{A,t})\right],\quad t\ge0,\,x\in H,\,\phi\in C_b(H).
\end{equation}
In the sequel, we denote by $C=C(\alpha, \gamma)>0$ a positive constant allowed to change from line to line.
According to \cite[Theorem 4.14]{PZ11},  for every $t>0$, $P_t$ maps Borel measurable and bounded
 functions $\phi$
 into $C^1_b(H)$, and the following gradient estimate holds:
 \begin{equation}\label{grad_est}
 \|DP_t \phi\|_0=	\sup_{x \in H}| DP_t \phi(x)|
 	 \le \frac{ C}{t^{\gamma}}  \| \phi\|_{0}.
 \end{equation}
If additionally $\phi\in C_b(H)$, then the  G\^ateaux derivative of $P_t\phi$ is given by
 \begin{equation*}
 	\langle DP_t \phi (x), h \rangle  =
 	\int_H \phi(e^{tA} x +  y ) \, \,  J_t(h,y)
 	\; \mu_t (dy),\quad h\in H,
 \end{equation*}
 where $\mu_t$ is the law of the stochastic convolution $Z_{A,t}^0$ and
 $J_t(h,\cdot ) \in L^2(H, \mu_t)$ such that
 \begin{equation*}
 	\int_H  | J_t(h,y)|^2
 	\; \mu_t (dy) \le   \frac{ C}{t^{2\gamma}} |h|^2.
 \end{equation*}
 We refer to \cite[Theorem 7 and Corollary 8]{BO_iso} for analogous results in the context of OU processes driven  by
 subordinated cylindrical Brownian noises.

\subsection{The State Equation}\label{sub_state}
We consider  a map
\begin{equation}\label{f3}
	F: H \to H \;\; \text{ Lipschitz continuous and bounded};
\end{equation}
we denote by $[F]_\text{Lip}$ the Lipschitz continuity constant of $F$. For a fixed $R>0$,  we  define the set of processes
\begin{equation}\label{admissible_controls}
\mathcal{U}=\mathcal{U}_R=\left\{
a\colon [0,\infty)\times \Omega \to B_R(H) \text{ s.t. $a$ is predictable}
\right\},
\end{equation}
where $B_R(H)$ is the closed ball centered at $0$ with radius $R$ in $H$.\hh
Inspired by the cylindrical Wiener case analyzed  in \cite{CDP, Gozzi95}, for every $a\in \mathcal{U}$, $t\ge0$ and $x\in H$
we  consider  the following controlled nonlinear stochastic differential equation:
 \begin{equation}\label{ab1}
dX_s  = (A X_s + F(X_s)) ds  + a_s ds
 +   dZ_s, \quad  s \ge t,  \; X_t  = x \in  H.
\end{equation}
Starting from Section \ref{sec_control} below, we investigate a control problem featuring  \eqref{ab1} as the \emph{state equation}.
Indeed, the mild formulation of \eqref{ab1} has a pathwise unique solution, as shown in the next lemma, which is proved similarly to \cite[Theorem 5.3]{PZ11}.

\begin{lemma}\label{well_SPDE}
	For every $a\in \mathcal{U}$, $t\ge0$ and $x\in H$, \eqref{ab1} admits a unique predictable mild solution with $p-$locally integrable paths for $p\in[1,\alpha)$, that is, there exists a unique predictable
	process $X=(X_s)_{s\ge t}$ with trajectories in $L^p_{\text{loc}}(s,\infty)$ for any $p\in[1,\alpha)$ such that, $\P-$a.s.,
	\begin{equation}\label{mild_sol}
		X_s=e^{(s-t)A}x+\int_{t}^{s}e^{(s-r)A}\left(F(X_r)+a_r\right)dr + Z^0_{A,s}-e^{(s-t)A}Z^{0}_{A,t} ,\quad s\ge t.
	\end{equation}
\end{lemma}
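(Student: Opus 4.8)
The plan is to reduce the mild formulation \eqref{mild_sol} to a family of deterministic Volterra integral equations parametrised by $\omega\in\Omega$ and to solve them pathwise by a contraction argument, following the scheme of \cite[Theorem 5.3]{PZ11}; the extra drift $\int_t^s e^{(s-r)A}a_r\,dr$ coming from the control is harmless since $a$ takes values in the bounded set $B_R(H)$. I would fix a horizon $T>t$, work on $[t,T]$, and recover a solution on $[t,\infty)$ at the end by consistency. By \cite[Proposition 4.2 and Theorem 4.4]{PZ11} the stochastic convolution $Z^0_A$ admits a predictable version whose paths lie in $L^p_{\text{loc}}(t,\infty)$ for every $p\in[1,\alpha)$ and which takes values in $H$ at each fixed time; consequently the recentred process $\Theta_s:=Z^0_{A,s}-e^{(s-t)A}Z^0_{A,t}$, $s\ge t$, is predictable, has $L^p_{\text{loc}}$ paths, satisfies $\Theta_t=0$, and $s\mapsto e^{(s-t)A}Z^0_{A,t}$ is continuous because $Z^0_{A,t}\in H$. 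A process $X$ solves \eqref{mild_sol} on $[t,T]$ if and only if $Y:=X-\Theta$ solves, $\P$-a.s.,
\[
Y_s=e^{(s-t)A}x+\int_t^s e^{(s-r)A}\bigl(F(Y_r+\Theta_r)+a_r\bigr)\,dr,\qquad s\in[t,T].
\]

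For each fixed $\omega$ I would view the right-hand side as a map $\Gamma=\Gamma_\omega$ on $C([t,T];H)$. It is well defined and $C([t,T];H)$-valued: since $A$ is negative definite and self-adjoint one has $\|e^{uA}\|_{\mathcal L(H)}\le1$ for $u\ge0$; the integrand $r\mapsto F(Y_r+\Theta_r(\omega))+a_r(\omega)$ is Borel measurable (composition of the continuous $F$ with a measurable $H$-valued path, plus $a_\cdot(\omega)$) and bounded by $\|F\|_0+R$, so $s\mapsto\int_t^s e^{(s-r)A}(\cdots)\,dr$ is a continuous $H$-valued path by the strong continuity of $(e^{uA})_{u\ge0}$ and dominated convergence. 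Using $\|e^{uA}\|_{\mathcal L(H)}\le1$ and the Lipschitz constant of $F$,
\[
\bigl|\Gamma(Y^1)_s-\Gamma(Y^2)_s\bigr|\le[F]_\text{Lip}\int_t^s\bigl|Y^1_r-Y^2_r\bigr|\,dr,\qquad s\in[t,T],
\]
and iterating this estimate gives $\|\Gamma^nY^1-\Gamma^nY^2\|_{C([t,T];H)}\le\frac{([F]_\text{Lip}(T-t))^n}{n!}\|Y^1-Y^2\|_{C([t,T];H)}$, so some power of $\Gamma$ is a contraction and $\Gamma$ admits a unique fixed point $Y(\omega)\in C([t,T];H)$. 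Then $X:=Y+\Theta$ solves \eqref{mild_sol} on $[t,T]$ and its paths, being sums of a continuous path and an $L^p_{\text{loc}}$ path, lie in $L^p_{\text{loc}}$ for $p\in[1,\alpha)$.

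It remains to obtain predictability and uniqueness. For predictability I would build $Y$ through the Picard scheme $Y^{(0)}_s\equiv e^{(s-t)A}x$, $Y^{(n+1)}:=\Gamma(Y^{(n)})$: since $\Theta$ and $a$ are predictable and $F$ is continuous, each $Y^{(n)}$ is an adapted process with continuous paths, hence predictable, and $Y^{(n)}\to Y$ uniformly on $[t,T]$ for every $\omega$ by the contraction estimate, so $Y$ — and therefore $X=Y+\Theta$ — is predictable. For uniqueness, if $X^1,X^2$ are two predictable mild solutions with $L^p_{\text{loc}}$ paths, then $Y^i:=X^i-\Theta$ has continuous paths (the integrand $F(X^i_\cdot)+a_\cdot$ is bounded because $F$ is) and is a fixed point of $\Gamma_\omega$ for a.e.\ $\omega$, whence $Y^1=Y^2$ and $X^1=X^2$; letting $T\uparrow\infty$ then yields the unique solution on $[t,\infty)$.

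The main — and essentially the only — delicate point is the lack of continuity, indeed of c\`adl\`ag regularity, of the stochastic convolution $Z^0_A$, which is merely $L^p_{\text{loc}}$ in time. This is why the argument is carried out on the ``$Y$-part'': because $F$ is bounded, $F(Y+\Theta)$ is bounded regardless of how rough $\Theta$ is, so the Volterra operator still produces continuous $H$-valued paths and the pathwise contraction goes through unchanged; the same boundedness forces the $Y$-part of any mild solution to be continuous, which is what makes the uniqueness argument close. The rest is the routine bookkeeping ensuring that each Picard iterate is a genuine predictable process.
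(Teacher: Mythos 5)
Your proposal is correct and follows essentially the same route as the paper: the same shift $Y=X-Z^0_{A,\cdot}+e^{(\cdot-t)A}Z^0_{A,t}$ reducing \eqref{mild_sol} to a pathwise Volterra equation, a fixed-point argument in $C([t,T];H)$, and predictability via the Picard iterates. The only cosmetic differences are that you make a power $\Gamma^n$ contractive via the $([F]_{\text{Lip}}(T-t))^n/n!$ estimate instead of taking $T$ small and patching intervals, and that you invoke the boundedness of $F$ where the paper uses only its Lipschitz continuity (the paper remarks that boundedness is in fact never needed here).
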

\begin{proof}
	Let $a\in \mathcal{U}$, $t\ge0$ and $x\in H$; notice that a process $X=(X_s)_{s\ge t}$ satisfies \eqref{mild_sol} if and only if the process $Y=(Y_s)_{s\ge t}$ defined by $Y_s=X_s-Z^0_{A,s}+e^{(s-t)A}Z^0_{A,t}$ fulfills
	\begin{equation}\label{Y}
				Y_s=e^{(s-t)A}x+\int_{t}^{s}e^{(s-r)A}\left(F(Y_r+Z^0_{A,r}-e^{(r-t)A}Z^{0}_{A,t})+a_r\right)dr ,\quad s\ge t.
	\end{equation}
	We then focus on this equation and demonstrate that it admits a unique predictable solution with continuous trajectories. This is sufficient to deduce the properties of $X$  in the statement of this lemma.
	
	Fix $T>0$ and denote by $C([t,t+T];H)$ the Banach space of $H-$valued continuous functions on $[t,t+T]$ endowed with the usual supremum norm $\|\cdot\|_{0}$. For $\P-\text{a.e. } \omega\in \Omega$, the functional ${\Gamma_{t^{T}_0,\omega}}$ given by
	\begin{align*}
&	\Gamma_{t^{T}_0,\omega} (f)(s)	=e^{(s-t)A}x\\&\hspace{2.5cm}
+\int_{t}^{s}e^{(s-r)A}\!\left(F(f(r)+Z^0_{A,r}(\omega)-e^{(r-t)A}Z^{0}_{A,t}(\omega))+a_r(\omega)\right)\!dr, \\&\qquad   s\in [t,t+T],\, f\in C([t,t+T];H),
	\end{align*}
	is well defined. Indeed, recalling that $a_r\in B_R(H)$ for every $r\in [t,t+T]$ and $Z^{0}_{A,\cdot}\in L^{p}(t,t+T)$ for any $p\in [1,\alpha)$, by \eqref{f3} we have
	\begin{align*}
		|	\Gamma_{t_0^T,\omega} (f)(s)|\!&\le |x|\!+\!\int_{t}^{s}\!\!\!\left(|F(0)|\!+\!|a_r(\omega)|\!+\![F]_{\text{Lip}}\left(\|f\|_{0}+|Z^{0}_{A,r}(\omega)|\!+\!|Z^{0}_{A,t}(\omega)|\right)\right) \!dr\\&
		<\infty,\quad s\in [t,t+T],
	\end{align*}
	where we also use the fact that $e^{\cdot A}=(e^{uA})_{u\ge 0}$ is a contraction semigroup on $H$.  Since $e^{\cdot A}$ is strongly continuous, by the dominated convergence theorem we infer that $\Gamma_{t_0^T,\omega}$ takes values in $C([t,t+T];H)$. 	 Moreover,
	for every $f_1,f_2\in C([t,t+T];H)$,
	\begin{equation}\label{rel_const}
	\|	\Gamma_{t_0^T,\omega}( f_1)-\Gamma_{t_0^T,\omega} (f_2)\|_{0}\le [F]_{\text{Lip}} |T|\|f_1-f_2\|_{0}.
	\end{equation}
	It then follows that, for $T$ sufficiently small, $\Gamma_{t_0^T,\omega}$ is a contraction in $C([t,t+T];H)$ which has a unique fixed point $\bar{f}_{t_0^T,\omega}$.\\
	Given that the relation among  constants in \eqref{rel_const} does not depend on the initial point $x$, a standard argument by steps based on the semigroup property of $e^{\cdot A}$ enable us to consider the entire  half-line $[t,\infty)$. More precisely, thanks to \eqref{rel_const},  for every $n\in\mathbb{N}$ we can define iteratively the contraction mappings $\Gamma_{t_{nT}^{(n+1)T},\omega}$ in $C([t+nT,t+(n+1)T];H)$ by
			\begin{align*}
			&	\Gamma_{t_{nT}^{(n+1)T},\omega} (f)(s)	=e^{(s-t-nT)A}\bar{f}_{t_{(n-1)T}^{nT},\omega}(t+nT)
			\\&\hspace{3.2cm}+\int_{t+nT}^{s}\!e^{(s-r)A}\!\left(F(f(r)+Z^0_{A,r}(\omega)-e^{(r-t)A}Z^{0}_{A,t}(\omega))+a_r(\omega)\right)\!dr, \\&\qquad   s\in [t+nT,t+(n+1)T],\, f\in C([t+nT,t+(n+1)T];H),
		\end{align*}
	where $\bar{f}_{t_{(n-1)T}^{nT},\omega}$ denotes the unique fixed point in $C([t+(n-1)T,t+nT];H)$ of $\Gamma_{t_{(n-1)T}^{nT},\omega}$.
	 Therefore, the function $f_\omega\in C([t,\infty);H)$ defined by
	 \[
	 	f_\omega(s)= \bar{f}_{t_{(n-1)T}^{nT},\omega}(s),\quad s\in[t+(n-1)T,t+nT],\,n\in \mathbb{N},
	 \]
	 is the unique continuous mapping in $[t,\infty)$ such that
	\begin{align*}
	&	f_{\omega}(s)	=e^{(s-t)A}x\!+\!\int_{t}^{s}\!e^{(s-r)A}\left(F(f_\omega(r)+Z^0_{A,r}(\omega)-e^{(r-t)A}Z^{0}_{A,t}(\omega))+a_r(\omega)\right)dr,\\&
	\qquad s\ge t.
	\end{align*}
Additionally, for every $n\in\mathbb{N}$, denoting by $\Gamma_{t_{(n-1)T}^{nT},\omega}^{(m)}$ the composition of $\Gamma_{t_{(n-1)T}^{nT},\omega}$ with itself $m-$times,   we have
	\[
		\lim_{m\to \infty}\Big\|\Gamma_{t_{(n-1)T}^{nT},\omega}^{(m)}(0)-f_\omega\big|_{[t+(n-1)T,t+nT]}\Big\|_{0}=0.
	\]
The process $Y=(Y_s)_{s\ge t}$ defined by $Y_s(\omega)=f_\omega(s)$ for $s\in[t,\infty)$ and
$\P-\text{a.s. } \omega \in \Omega$ is then the unique continuous solution of \eqref{Y}. 
Furthermore, arguing by induction, one can easily see that $$\big(\Gamma_{t_{(n-1)T}^{nT},\omega}^{(m)}(0)\big)_m\text{ is a sequence of
 predictable
processes, for every } n\in \mathbb{N}.$$
As a result,  $Y$ is predictable, as $Y_s(\omega)=\lim_{m\to\infty} \Gamma_{t_{(n-1)T}^{nT},\omega}^{(m)}(0)(s)$ for $\P-\text{a.s. } \omega \in \Omega$ and $s\in[t+{(n-1)T},t+nT
]$. The proof is now complete.
\end{proof}
Consider the unique solution $(X_s)_{s\ge t}$ of \eqref{mild_sol}.  Recalling  that $Z^0_A$ is predictable, if we define  $X_s=X_t$ for every $s\in[0,t)$,  then  the process $X=(X_s)_{s\ge 0}$ is predictable, as well.
In the sequel, to stress the dependence of $X$ on the starting point $x$, the initial time $t$ and the control $a$, we denote $X$ by $X^{t,x,a}=(X_s^{t,x,a})_{s\ge 0}$.
{\color{black}\begin{remark}
Lemma \ref{well_SPDE} still holds when $F$ is only Lipschitz continuous. Indeed, the boundedness of $F$ in \eqref{f3} is never used in its proof.
\end{remark} }

\section{The Control Problem and the associated HJB equation}\label{sec_control}

For a fixed $R>0$, we  consider a control problem where the set of admissible controls is $\mathcal{U}=\mathcal{U}_R$, see \eqref{admissible_controls}, and the state equation is  \eqref{ab1}. As discussed in Subsection \ref{sub_state}, for any $a\in\mathcal{U},
\,t\ge0$ and $x\in H$, \eqref{ab1} admits a unique predictable mild solution $X^{t,x,a}=(X_s^{t,x,a})_{s\ge 0}$ satisfying \eqref{mild_sol}.\hh
Given $h,\,g\in C_b(H)$ and a finite time-horizon $T>0$,  the  cost functional $J (t,x, a)$ that we investigate is
\begin{align}\label{def_J}
	\notag&J(t,x,a) = \E \bigg [ \int_t^T \Big(g(X_s^{t,x,a}) + \frac{1}{2} |a_s|^2\Big)ds  \; + \; h(X_T^{t,x,a}) \bigg],\\
	&\qquad t\in[0,T],\,x\in H,\,a \in \mathcal{U}.
\end{align}
The corresponding value function $V: [0,T] \times H \to \R$ is then defined by
\begin{align}\label{def_V}
 V(t,x) = \inf_{a \in {\mathcal U}} J(t,x,a).
\end{align}
We study this control problem  following  the Dynamic Programming Approach, focusing on the nonlocal
parabolic HJB equation (see, e.g.,  \cite{CDP}, \cite{FGS17} and \cite{Z99})
\begin{align} \label {hjb}
\begin{cases}
 \partial_t u(t,x) = g(x) + \inf_{\lambda \in B_R(H)} [ {\mathcal L}^{\lambda} u(t,x)],\quad  t \in ]0,T],
\\
 u(0,x) = h(x),\quad  x \in H.
\end{cases}
 \end{align}
Here, for a sufficiently regular cylindrical function $\phi$, $$
{\mathcal L}^{\lambda} \phi(x)  = L^{OU} \phi (x)+  \langle F(x), D \phi (x)\rangle +  \Big[\langle \lambda, D \phi(x) \rangle + \frac{1}{2}|\lambda|^2\Big],\quad x\in H,
$$ where, denoting by $\nu(d\xi)$ the L\'evy measure of the processes $(Z^n)_n$ (see also \cite[Theorem 31.5]{sato})
\begin{align}\label{LOU}\notag
 L^{OU} \phi (x) = &\langle Ax, D \phi (x)\rangle \\&+  \sum_{j=1}^{\infty}  \int_{\mathbb{R}}\left (\phi(x+ \beta_j\xi e_j)- \phi(x)-  \beta_j \xi \frac{\partial \phi}{\partial x_j}(x)\right) \nu(d\xi),\quad x\in H.
\end{align}
Notice that, by \eqref{esagera}, \cite[Theorems 14.3 (ii) and 14.15]{sato} and the L\'evy-Khintchine formula,
\begin{equation}\label{def_calpha}
\nu(d\xi)=\frac{c_\alpha}{|\xi|^{\alpha+1}}d\xi,\quad \text{where }\quad c_\alpha=\frac{1}{2}\left(-\Gamma(-\alpha)\cos\frac{\pi\alpha}{2}\right)^{-1},
\end{equation}
{\color{black} thus, with the change of variables $\xi_j'=\beta_j\xi$, we can rewrite \eqref{LOU} as follows:
\begin{align*}
	L^{OU} \phi (x) = &\langle Ax, D \phi (x)\rangle \\&+ c_\alpha \sum_{j=1}^{\infty}  \beta_j^\alpha \int_{\mathbb{R}}\left (\phi(x+ \xi e_j)- \phi(x)-  \xi \frac{\partial \phi}{\partial x_j}(x)\right)\frac{1}{|\xi|^{\alpha+1}} d\xi,\quad x\in H.
\end{align*}
}
   We now  introduce the   Hamiltonian function $\mathcal{H}$ defined by
\begin{align}\label{def_ham}
\notag\mathcal {H} (x,p) &=    \inf_{\lambda \in B_R(H)} \Big[\langle \lambda, p \rangle
 + \frac{1}{2} |\lambda|^2 \Big] + \langle F(x), p \rangle + g(x),
\\&
\eqqcolon H(p) +  \langle F(x), p \rangle + g(x),\quad x,\,p \in H.
\end{align}
By imposing first order conditions on the G\^ateaux differential of the convex map $\lambda\mapsto \langle \lambda, p \rangle
	+ \frac{1}{2} |\lambda|^2  $ and applying the Cauchy--Schwarz inequality, we  derive an explicit expression for $H$, namely
\begin{equation}\label{def_H}
	H(p)=\begin{cases}
		-\frac{1}{2}|p|^2,&|p|\le R,\\
		-R|p|+\frac12R^2,&|p|> R.
	\end{cases}
\end{equation}
Note also that $\mathcal{H}(\cdot,0)=g$.
Using $\mathcal{H}$, the HJB equation \eqref{hjb} can be rewritten as follows:
\begin{align}\label{HJB_H}
\begin{cases}
	\partial_t u(t,x) =  \H(x,  Du(t,x)) + L^{OU} u(t,x),\quad  t \in ]0,T],
	\\
	u(0,x) = h(x),\quad x \in H.
\end{cases} \end{align}In Section \ref{Sec_mild}, we study the well-posedness of \eqref{HJB_H} in a mild formulation.
\hh
We conclude this part with a lemma stating some properties of $\mathcal{H}$. Its proof, which relies on  the definition in \eqref{def_ham}, \eqref{def_H} and the fact that $F$ and $g$ are continuous and bounded, is straightforward and therefore  omitted.
\begin{lemma}\label{lip_H}
	The Hamiltonian $\mathcal{H}\colon H\times H\to \mathbb{R}$ is continuous in both variables. Furthermore, there exists a constant $L>0$ such that
	$$
	|\H(x,p) - \H(x,q)| \le L |p-q|,\quad p,\,q,\,x \in H.
	$$
	In particular,
	\begin{equation}\label{Ham_bound}
|\H(x,p)|  \le L |p| + \lVert g\rVert_0,\quad x,\,p\in H.
	\end{equation}
\end{lemma}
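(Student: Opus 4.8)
The plan is to exploit the decomposition $\mathcal{H}(x,p) = H(p) + \langle F(x),p\rangle + g(x)$ recorded in \eqref{def_ham} and to treat the three summands separately. The only point requiring a moment's care is the Lipschitz estimate for $H$: the explicit formula \eqref{def_H} is quadratic on the ball $\{|p|\le R\}$, which might suggest a lack of global Lipschitz continuity, but this is illusory, since $H$ is by its very definition an infimum of affine functions of $p$ whose slopes are uniformly bounded by $R$.

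Concretely, I would first show that $H$ is globally Lipschitz with constant $R$. For each fixed $\lambda\in B_R(H)$ the map $p\mapsto \langle\lambda,p\rangle + \frac12|\lambda|^2$ is affine with slope $\lambda$, hence $R$-Lipschitz; passing to the infimum over $\lambda\in B_R(H)$ preserves this. In detail: given $p,q\in H$ and $\varepsilon>0$, pick $\lambda_\varepsilon\in B_R(H)$ with $\langle\lambda_\varepsilon,q\rangle + \frac12|\lambda_\varepsilon|^2 \le H(q)+\varepsilon$; then
\[
H(p) \le \langle\lambda_\varepsilon,p\rangle + \tfrac12|\lambda_\varepsilon|^2 = \langle\lambda_\varepsilon,q\rangle + \tfrac12|\lambda_\varepsilon|^2 + \langle\lambda_\varepsilon,p-q\rangle \le H(q) + \varepsilon + R|p-q|,
\]
and letting $\varepsilon\downarrow0$ and interchanging the roles of $p$ and $q$ yields $|H(p)-H(q)|\le R|p-q|$. (Equivalently, one may read this constant off \eqref{def_H} directly, by estimating the norm of the gradient of each of the two branches.)

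It then remains to assemble the pieces. Writing $\|F\|_0 := \sup_{x\in H}|F(x)| < \infty$, which is finite because $F$ is bounded, we obtain for all $x,p,q\in H$
\[
|\mathcal{H}(x,p) - \mathcal{H}(x,q)| = \bigl|H(p) - H(q) + \langle F(x),p-q\rangle\bigr| \le \bigl(R + \|F\|_0\bigr)|p-q|,
\]
so the claimed estimate holds with $L := R + \|F\|_0$. Joint continuity of $\mathcal{H}$ on $H\times H$ follows from that of $H$ (being Lipschitz), that of $g$, and the joint continuity of $(x,p)\mapsto \langle F(x),p\rangle$, the last being a consequence of the continuity of $F$ together with that of the inner product. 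Finally, since $\mathcal{H}(\cdot,0) = g$ (noted right after \eqref{def_H}), the pointwise bound \eqref{Ham_bound} is immediate: $|\mathcal{H}(x,p)| \le |\mathcal{H}(x,p) - \mathcal{H}(x,0)| + |g(x)| \le L|p| + \|g\|_0$. The take-away is that the only subtlety — the quadratic shape of $H$ near the origin — is harmless, the quadratic branch being confined to $\{|p|\le R\}$, where its slope never exceeds $R$.
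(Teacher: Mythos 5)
Your proposal is correct and takes essentially the approach the paper intends: the paper omits the proof as straightforward, indicating it relies precisely on \eqref{def_ham}, \eqref{def_H} and the boundedness and continuity of $F$ and $g$, which is what you use. The details you supply — the $R$-Lipschitz continuity of $H$ as an infimum over $B_R(H)$ of affine functions with slopes of norm at most $R$, the bound $|\langle F(x),p-q\rangle|\le \|F\|_0|p-q|$, and the deduction of \eqref{Ham_bound} from $\mathcal{H}(\cdot,0)=g$ — are all valid.
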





\section{Mild solutions of the HJB equation}\label{Sec_mild}
In this section, we study Equation \eqref{HJB_H} in mild form. Recall that a suitably regular map $u\colon [0,T]\times H\to \R$ is a \emph{mild solution} of \eqref{HJB_H} if $u$ fulfills the following convolution equation:
\begin{equation}
 \label{km}
 u(t,x) = P_t h (x) + \int_0^t P_{t-s} [\H (\cdot ,
 D u(s, \cdot ))]  (x)\, ds ,\quad t
\in [0,T],\; x \in H.
\end{equation}
To stress the dependence of $u$ on the given functions $g$ and $h$, in the sequel we can also write
$$
u(t,x) = u^{g,h}(t,x),\quad t \in [0,T],\, x \in H.
$$

We search for solutions of \eqref{km}
 in the functional space
 \begin{equation}
 C^{1}_{\gamma}(H)=\! \Bigg\{u\colon[0,T]\times H\to \mathbb{R} \text{ cont., bounded }\bigg|\,\, \begin{aligned}
 &u(t, \cdot) \in C_b^1(H),\,t\in ]0,T] \\&\!\!
  \sup_{t\in]0,T]} t^\gamma\|Du(t,\cdot)\|_0<\infty
 \end{aligned}
 \Bigg\},
 \label{eq:defC1gamma}
 \end{equation}
where $\gamma$ is given in Hypothesis \ref{ciao1}. As in \cite[Section 9.2]{Z99}  {(see also \cite{Bo_iter}, \cite[Section 9.5]{DZ} and \cite[Section 4.2.2]{FGS17}, where similar spaces are introduced)},
 we consider the norm
$$
\| u\|_{C^{1}_{\gamma}} = \sup_{t \in [0,T]} \| u(t,\cdot)\|_0 + \sup_{t \in ]0,T]} t^{\gamma} \|Du(t,\cdot)\|_0, \quad u\in C^1_\gamma(H);
$$
the couple $(C^1_\gamma(H), \|\cdot\|_{C^1_\gamma})$ constitutes a Banach space. \\
In Theorem \ref{thm_wp} below we demonstrate the well-posedness of \eqref{km} in $C^1_\gamma(H)$.  {We refer the reader to \cite[Theorem 2.1]{Bo_iter}, \cite[Theorem 9.38]{DZ}, \cite[Section 9.2]{Z99} and \cite[Section 4.4.1]{FGS17} for similar results in different settings.}


Before that, we present a preliminary lemma on the regularity of the map $(t,x)\mapsto P_t\phi (x)$ on $[0,\infty)\times H$, for a given $ \phi\in C_b(H)$.
\begin{lemma}\label{lemma_contpart}
	For every $\phi\in C_b(H)$, the function $(t,x)\mapsto P_t\phi(x)$ is continuous in $[0,\infty)\times H$. Furthermore, given a direction $p\in H$, the G\^ateaux derivative  $(t,x)\mapsto\langle DP_t\phi(x),p\rangle$ along $p$ is continuous in  $  (0,\infty)\times H$.
\end{lemma}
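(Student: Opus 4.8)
The plan is to work throughout with the representation $P_t\phi(x)=\mathbb{E}[\phi(e^{tA}x+Z^0_{A,t})]$ (recall $Z^x_{A,t}=e^{tA}x+Z^0_{A,t}$), exploiting three facts from Section \ref{sec2}: the contraction semigroup $e^{\cdot A}$ is strongly continuous; the stochastic convolution $Z^0_A$ has a stochastically continuous version (\cite[Theorem 4.4]{PZ11}), so $Z^0_{A,t_n}\to Z^0_{A,t_0}$ in probability whenever $t_n\to t_0\ge0$, with $Z^0_{A,0}=0$; and, for the second assertion, the smoothing estimate \eqref{grad_est} together with the semigroup identity $P_{s+u}=P_sP_u$.

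For the first assertion I would take $(t_n,x_n)\to(t_0,x_0)$ with $t_0\ge0$, observe that $e^{t_nA}x_n\to e^{t_0A}x_0$ in $H$ (since $\|e^{t_nA}(x_n-x_0)\|\le\|x_n-x_0\|$ and $e^{t_nA}x_0\to e^{t_0A}x_0$ by strong continuity), and combine this with $Z^0_{A,t_n}\to Z^0_{A,t_0}$ in probability to get $e^{t_nA}x_n+Z^0_{A,t_n}\to e^{t_0A}x_0+Z^0_{A,t_0}$ in probability. Continuity of $\phi$ then gives $\phi(e^{t_nA}x_n+Z^0_{A,t_n})\to\phi(e^{t_0A}x_0+Z^0_{A,t_0})$ in probability (continuous mapping, or via a.s.\ convergent subsequences), and, the sequence being bounded by $\|\phi\|_0$, dominated convergence yields $P_{t_n}\phi(x_n)\to P_{t_0}\phi(x_0)$.

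For the second assertion the crucial step is a reduction to $C^1_b$ data. If $\psi\in C^1_b(H)$, differentiating under the expectation — legitimate by the mean value inequality, $\|e^{tA}\|\le1$ and dominated convergence — gives, for every $t\ge0$ and $x,p\in H$,
\[
\langle DP_t\psi(x),p\rangle=\mathbb{E}\big[\langle D\psi(e^{tA}x+Z^0_{A,t}),\,e^{tA}p\rangle\big],
\]
this Gâteaux derivative coinciding with the Fréchet one since $P_t\psi\in C^1_b(H)$. Repeating the argument of the first assertion — now using in addition $e^{t_nA}p\to e^{t_0A}p$ in $H$, continuity and boundedness of $D\psi$, bilinearity of $\langle\cdot,\cdot\rangle$, and the uniform bound $\|D\psi\|_0\,|p|$ — shows that $(t,x)\mapsto\langle DP_t\psi(x),p\rangle$ is continuous on $[0,\infty)\times H$ whenever $\psi\in C^1_b(H)$. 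To handle a general $\phi\in C_b(H)$ at a point $(t_0,x_0)$ with $t_0>0$, I would set $\psi:=P_{t_0/2}\phi\in C^1_b(H)$ (by \eqref{grad_est}); for $t\ge t_0/2$ the semigroup identity gives $P_t\phi=P_{t-t_0/2}\psi$, hence $\langle DP_t\phi(x),p\rangle=\langle DP_{t-t_0/2}\psi(x),p\rangle$, and letting $(t,x)\to(t_0,x_0)$ — so that $(t-t_0/2,x)\to(t_0/2,x_0)$ — the continuity just established for the fixed $C^1_b$ function $\psi$ gives the limit $\langle DP_{t_0/2}\psi(x_0),p\rangle=\langle DP_{t_0}\phi(x_0),p\rangle$.

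The main difficulty is exactly the time-continuity of the gradient in the second assertion: trying to differentiate the explicit formula $\langle DP_t\phi(x),p\rangle=\int_H\phi(e^{tA}x+y)\,J_t(p,y)\,\mu_t(dy)$ directly is awkward, because both the kernel $J_t$ and the law $\mu_t$ depend on $t$ in a way not explicitly under control. The splitting $t=(t-t_0/2)+t_0/2$ circumvents this by moving half the time onto the \emph{fixed} smooth function $P_{t_0/2}\phi$, after which the problem reduces to differentiating under an expectation plus dominated convergence; the remaining ingredients are all in place from Section \ref{sec2}. The only point requiring a little care is the passage from convergence in probability of the random arguments to convergence of the expectations, which is supplied by the uniform bounds $\|\phi\|_0$ and $\|D\psi\|_0\,|p|$.
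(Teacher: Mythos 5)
Your proposal is correct and follows essentially the same route as the paper: both rest on the splitting $P_t=P_{t-\epsilon}\circ P_\epsilon$ together with the commutation identity $\langle DP_t\phi(x),p\rangle=P_{t-\epsilon}\big(\langle DP_\epsilon\phi(\cdot),e^{(t-\epsilon)A}p\rangle\big)(x)$ (your $\epsilon=t_0/2$), reducing everything to the gradient estimate \eqref{grad_est} on the short time increment plus joint continuity of $(s,x)\mapsto P_s\chi(x)$ for $\chi\in C_b(H)$. The only cosmetic differences are that you derive the identity by differentiating under the expectation rather than via the translation property $P_t\phi(x+y)=P_t(\phi(\cdot+e^{tA}y))(x)$, and you verify the final continuity directly from the probabilistic representation instead of re-invoking the first assertion.
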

\begin{proof}
	Fix $\phi\in C_b(H)$. The joint continuity of the map $(t,x)\mapsto P_t\phi(x)$ in $[0,\infty)\times H$ is a consequence of  \cite[Lemma 2.1]{BRS} and the stochastic continuity of $Z_A^0$.
{\color{black}	\hh As regards the G\^ateaux derivative, notice that, for every $t>0$, by \eqref{def_semi},
	\[
		P_t\phi(x+y)=\E\left[\phi\left(e^{tA}x+e^{tA}y+Z^0_{A,t}\right)\right]=P_t(\phi(\cdot+e^{tA}y))(x),\quad x,y\in H.
	\]
	 Then, given $p\in H$, for every $\epsilon>0$, by the semigroup property of $P$ we  write
	\begin{align}\label{solo_dopo}
	\notag	\langle  D P_t\phi(x),p \rangle&=\lim_{h\to0}P_{t-\epsilon}\left(\frac{P_{\epsilon}(\phi(\cdot+he^{tA}p))-P_{\epsilon}\phi}{h}\right)(x)
	\\\notag
	&=
	\lim_{h\to0}P_{t-\epsilon}\left(\frac{P_{\epsilon}\phi(\cdot+he^{(t-\epsilon) A}p))-P_{\epsilon}\phi}{h}\right)(x)\\
	&=
P_{t-\epsilon}\langle DP_{\epsilon}\phi(\cdot), e^{(t-\epsilon) A}p\rangle (x),\quad t\ge \epsilon,\,x\in H.
	\end{align}
Here we use the dominated convergence theorem for the last equality, which can be applied because, by \eqref{grad_est}, the mean value theorem and the fact that $(e^{uA})_{u\ge0}$ is a contraction semigroup on $H$,
\[
	|P_{\epsilon}\phi(y+he^{(t-\epsilon) A}p))-P_{\epsilon}\phi(y)|\le C\frac{1}{\epsilon^\gamma} \|\phi\|_0|h||p|
,\quad y\in H.
\]
If we now consider $t\ge \epsilon,\,x\in H$ and two sequences $(t_n)_n\subset[\epsilon,\infty),\, (x_n)_n\subset H$ such that $t_n\to t$ and $x_n\to x$ as $n\to \infty$, then
\begin{align*}
		&|	\langle  D P_{t_n}\phi(x_n),p \rangle-\langle  D P_t\phi(x),p \rangle|\le
			|	P_{t_n-\epsilon}(\langle DP_{\epsilon}\phi(\cdot), e^{(t_n-\epsilon) A}p-e^{(t-\epsilon) A}p\rangle)(x_n)|
			\\
			&\qquad +\!
			|	P_{t_n-\epsilon}(\langle DP_{\epsilon}\phi(\cdot), e^{(t-\epsilon) A}p\rangle)(x_n)-	P_{t-\epsilon}(\langle DP_{\epsilon}\phi(\cdot), e^{(t-\epsilon) A}p\rangle)(x)|\!\eqqcolon\mathbf{\upperromannumeral{1}}_n+\mathbf{\upperromannumeral{2}}_n.
\end{align*}
Observe that, by \eqref{grad_est} and the strong continuity of the semigroup $(e^{uA})_{u\ge0}$ on $H$,
\[
	\mathbf{\upperromannumeral{1}}_n
	\le \frac{C}{\epsilon^\gamma}\|\phi\|_0\,\left|e^{(t_n-\epsilon) A}p-e^{(t-\epsilon) A}p\right|\underset{n\to\infty}{\longrightarrow}0,
\]
and, considering that $\langle DP_\epsilon\phi(\cdot),e^{(t-\epsilon)A}p\rangle \in C_b(H)$, by the first result of this lemma
$$\lim_{n \to \infty}\mathbf{\upperromannumeral{2}}_n=0.$$ Therefore we conclude that $(t,x)\mapsto \langle DP_t\phi(x),p\rangle$ is continuous in $[\epsilon,\infty)\times H$. Since $\epsilon>0$ is arbitrary, the proof is complete.
}
\end{proof}
\begin{theorem}\label{thm_wp} There exists a
 unique  solution $u = u^{g,h}$ of \eqref{km} in $C^{1}_{\gamma}(H)$.
\end{theorem}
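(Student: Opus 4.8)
The plan is to realize $u$ as the unique fixed point, in the Banach space $\big(C^1_\gamma(H),\|\cdot\|_{C^1_\gamma}\big)$, of the operator
\[
(\Lambda u)(t,x) = P_t h(x) + \int_0^t P_{t-s}\big[\mathcal H(\cdot,Du(s,\cdot))\big](x)\,ds,
\]
using a fixed-point argument. The structural ingredients are the gradient estimate \eqref{grad_est}, the bound \eqref{Ham_bound} and the Lipschitz estimate of Lemma \ref{lip_H}, the regularizing property $P_\tau\big(C_b(H)\big)\subset C^1_b(H)$ from \cite[Theorem 4.14]{PZ11}, the continuity statements of Lemma \ref{lemma_contpart}, and the elementary identity $\int_0^t (t-s)^{-\gamma}s^{-\gamma}\,ds = t^{1-2\gamma}B(1-\gamma,1-\gamma)$, with $B$ the Euler Beta function, which is finite precisely because $\gamma<1$ by Hypothesis \ref{ciao1}(ii). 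It is exactly the integrability of this double singularity that forces us to work in $C^1_\gamma(H)$ rather than in $C^1_b(H)$.

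First I would show that $\Lambda$ maps $C^1_\gamma(H)$ into itself. For $u\in C^1_\gamma(H)$ and $s\in(0,T]$, Lemma \ref{lip_H} gives $\phi_s:=\mathcal H(\cdot,Du(s,\cdot))\in C_b(H)$ with $\|\phi_s\|_0\le L\|Du(s,\cdot)\|_0+\|g\|_0\le L s^{-\gamma}\|u\|_{C^1_\gamma}+\|g\|_0$, which is integrable in $s$ on $[0,T]$. Since $h,\phi_s\in C_b(H)$, the maps $P_t h$ and $P_{t-s}\phi_s$ lie in $C^1_b(H)$ for $t>0$, $s<t$, with $\|DP_t h\|_0\le C t^{-\gamma}\|h\|_0$ and $\|DP_{t-s}\phi_s\|_0\le C(t-s)^{-\gamma}\|\phi_s\|_0$. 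Differentiating the convolution term under the integral sign — justified by dominated convergence on the Gâteaux increments, with the $s$-integrable majorant $C(t-s)^{-\gamma}\big(L s^{-\gamma}\|u\|_{C^1_\gamma}+\|g\|_0\big)$ — shows $(\Lambda u)(t,\cdot)\in C^1_b(H)$ for $t>0$ and that $t^\gamma\|D(\Lambda u)(t,\cdot)\|_0$ is bounded on $(0,T]$: the contributions of $P_t h$ and of the integral are $\le C\|h\|_0$ and $\le C\,t^{1-\gamma}\big(L B(1-\gamma,1-\gamma)\|u\|_{C^1_\gamma}+(1-\gamma)^{-1}\|g\|_0\big)$, respectively; a cruder version of the same computation controls $\sup_{[0,T]}\|(\Lambda u)(t,\cdot)\|_0$. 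Joint continuity of $(t,x)\mapsto(\Lambda u)(t,x)$ on $[0,T]\times H$, and continuity of $x\mapsto D(\Lambda u)(t,x)$ — which, with its existence, upgrades Gâteaux to Fréchet differentiability, so that $(\Lambda u)(t,\cdot)\in C^1_b(H)$ in the strong sense required by \eqref{eq:defC1gamma} — would follow from Lemma \ref{lemma_contpart} applied to $P_t h$ and to $P_{t-s}\phi_s$, combined with dominated convergence in the integral using the same majorant.

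Next I would establish the contraction. For $u_1,u_2\in C^1_\gamma(H)$, the Lipschitz bound of Lemma \ref{lip_H} gives $|\mathcal H(x,Du_1(s,x))-\mathcal H(x,Du_2(s,x))|\le L s^{-\gamma}\|u_1-u_2\|_{C^1_\gamma}$, so, estimating as above,
\[
t^\gamma\|D(\Lambda u_1-\Lambda u_2)(t,\cdot)\|_0\le C L\,B(1-\gamma,1-\gamma)\,t^{1-\gamma}\|u_1-u_2\|_{C^1_\gamma},
\]
and $\|(\Lambda u_1-\Lambda u_2)(t,\cdot)\|_0\le (1-\gamma)^{-1}L\,t^{1-\gamma}\|u_1-u_2\|_{C^1_\gamma}$. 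Hence $\Lambda$ is a strict contraction on the analogue of $C^1_\gamma(H)$ over $[0,T_0]$ for $T_0$ small, depending only on $L,C,\gamma$, giving a unique fixed point there. To reach the prescribed horizon $T$, I would iterate: by the semigroup property, for $t\ge T_0$ equation \eqref{km} is equivalent to $u(t,\cdot)=P_{t-T_0}u(T_0,\cdot)+\int_{T_0}^t P_{t-s}[\mathcal H(\cdot,Du(s,\cdot))]\,ds$ with datum $u(T_0,\cdot)\in C^1_b(H)$; since $\|DP_\tau\psi\|_0\le\|D\psi\|_0$ for $\psi\in C^1_b(H)$, there is no left-endpoint singularity and the same contraction argument applies on $[T_0,2T_0]$ in the space of bounded continuous $C^1_b(H)$-valued maps, so finitely many such steps cover $[0,T]$. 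The pieces agree at the junction points and glue into an element of $C^1_\gamma(H)$ solving \eqref{km} on $[0,T]$, unique by construction. (Alternatively, one works directly on $[0,T]$ after replacing $\|\cdot\|_{C^1_\gamma}$ by the equivalent norm $\sup_{t\in[0,T]}e^{-\beta t}\big(\|u(t,\cdot)\|_0+t^\gamma\|Du(t,\cdot)\|_0\big)$ and taking $\beta$ large enough to absorb the singular kernel.)

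The step I expect to be most delicate is completing the ``$\Lambda$ maps $C^1_\gamma(H)$ into itself'' claim: rigorously differentiating under the integral sign in the presence of the two integrable singularities $(t-s)^{-\gamma}$ and $s^{-\gamma}$, and proving that $x\mapsto\int_0^t DP_{t-s}[\mathcal H(\cdot,Du(s,\cdot))](x)\,ds$ is the genuine continuous Fréchet derivative of $(\Lambda u)(t,\cdot)$ and not merely a Gâteaux derivative — this is where the continuity part of Lemma \ref{lemma_contpart} and dominated-convergence estimates uniform in $x$ are essential. The contraction, the gluing, and the joint-continuity verification are then routine.
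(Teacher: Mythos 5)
Your proposal is correct and follows essentially the same route as the paper: a Banach fixed-point argument for the very same convolution operator on $C^1_\gamma(H)$, using \eqref{grad_est}, Lemma \ref{lip_H}, Lemma \ref{lemma_contpart} and the integrability of $(t-s)^{-\gamma}s^{-\gamma}$ (the paper quotes \cite[Equation (2.12)]{Bo_iter} where you invoke the Beta function) to obtain invariance and a contraction for small $T$, followed by the same step-by-step extension via the semigroup property. The only differences are presentational, e.g.\ your explicit handling of the restarted equation on $[T_0,2T_0]$ and the optional weighted-norm variant, which the paper compresses into ``a standard step method''.
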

{\color{black}\begin{proof}
 Define the mapping $S$ in $ C_\gamma^1(H)$ by
 \begin{align}\label{contraction_map}
 	&\notag
 	S (u) (t,x) = P_t h (x) +
 	\int_0^t P_{t-s} [\H (\cdot ,
 	D u(s, \cdot ))]  (x)\, ds,\\&\qquad t\in[0,T],\,x\in H,\,u\in C^1_\gamma(H).
 \end{align}
Observe that $S$ takes values in $C_\gamma^1(H)$. Indeed, since $h\in C_b(H)$, by \eqref{grad_est} and Lemma \ref{lemma_contpart},  the function $(t,x)\mapsto P_th(x)$ belongs to $C_\gamma^1(H)$. Additionally, for every
 $u\in C_\gamma^1(H)$, the map $\Gamma (u)(t,x)\coloneqq \int_0^t P_{t-s} [\H (\cdot ,
 D u(s, \cdot ))]  (x)\, ds$ is continuous on $[0,T]\times H$ by the dominated convergence theorem and Lemmas \ref{lip_H} -  \ref{lemma_contpart}. Furthermore, once  again, by the dominated converge theorem and \eqref{grad_est}, $\Gamma u(t,\cdot)\in C_b^1(H)$ for any $t\in]0,T]$, and, by \eqref{Ham_bound},
 \begin{align*}
 &	\|D\Gamma (u)(t, \cdot ) \|_0\le C\int_{0}^{t}\frac{1}{(t-s)^\gamma}\left(L\frac{\|u\|_{C_\gamma^1}}{s^\gamma}+\|g\|_0\right)ds
 	\\&\qquad \le
 	\frac{C}{1-\gamma}T^{1-\gamma}\left(4^\gamma L\|u\|_{C_\gamma^1}\frac{1}{t^\gamma}+\|g\|_0\right),\quad t\in]0,T],
 \end{align*}
where we use \cite[Equation (2.12)]{Bo_iter} for the last inequality.
 Thus,
$S :  C^{1}_{\gamma}(H)\to C^{1}_{\gamma}(H) $. Since, by estimates similar to those above and Lemma \ref{lip_H},
\begin{align*}
\|	S (u_1)-S(u_2)\|_{C_\gamma^1}\le \frac{T^{1-\gamma}}{1-\gamma}L(1 +4^\gamma C)\|u_1-u_2\|_{C_1^\gamma},\quad u_1,u_2\in C_\gamma^1(H),
\end{align*}
we deduce that $S$ is a contraction in $C_\gamma^1(H)$ for $T$ small enough. Consequently, \eqref{km}  admits a unique solution $u \in C^{1}_{\gamma}(H)$ for a sufficiently small $T$. \\
This conclusion continues to hold even in the case of a general $T$, which can be demonstrated  by a standard step method relying on the semigroup property of $P$.
The theorem  is now completely proved.
\end{proof} }
We conclude the paper with a  regularity result on the solution $u=u^{g,h}$ to \eqref{km} -- Theorem \ref{rit} --    that seems to be new  even in the limiting Brownian case $\alpha=2$.
We  focus on the H\"older continuity of $Du(t,\cdot),\,t\in ]0,T]$. For every $\theta\in (0,1]$, we denote by $C^{0,\theta}_b(H)$ [resp., $C^{0,\theta}_b(H;H)$] the space of $\R-$valued [resp., $H-$valued] bounded and $\theta-$H\"older continuous functions $l$ endowed with the usual norm
\begin{gather*}
	\| l\|_{C^{0,\theta}_b} \coloneqq \| l\|_0 +  [l]_{\theta},\quad \text{where }[l]_{\theta}=\sup_{x,y\in H, \,x\neq y}\frac{|l(x)-l(y)|}{|x-y|^\theta}.
\end{gather*}
In particular, $C^{0,1}_b(H)$ [resp., $C^{0,1}_b(H;H)$] is the space of $\R-$[resp., $H-$]valued Lipschitz continuous and bounded functions. As we have done for $F$ in \eqref{f3}, we  write  $[l]_{\text{Lip}}$ for $[l]_1$.
We also consider the space
\begin{align*}
&	C_b^{1,\theta}(H) \coloneqq \{ f \in C_b^1(H) \; \text{s.t. }  Df : H \to H \;\; \text{is $\theta-$H\"older continuous}\},\\&\qquad \text{endowed with the norm $\| f\|_{C_b^{1,\theta}} \coloneqq \| f\|_{1} + [Df]_{\theta}$.}
\end{align*}
{\color{black}
 \begin{theorem} \label{rit} For every $\theta\in (0,1)$ such that $\gamma+\theta\gamma<1$,  the
 unique  solution $u = u^{g,h}$ of \eqref{km} in $C^{1}_{\gamma}(H)$ satisfies
  \begin{equation}\label{theta}
 u(t, \cdot) \in C^{1,\theta}_b(H), \quad t \in ]0,T],
 \end{equation}
that is,
 the Fréchet derivative $Du(t, \cdot)$  is  $\theta-$H\"older continuous from $H$ into $H$. Furthermore, there exists a constant $\tilde{L}>0$ such that
 \begin{equation}\label{est_thetaconstant}
 [Du(t,\cdot)]_\theta\le \tilde{L}\frac{1}{t^{\gamma+\gamma\theta}}	,\quad t\in ]0,T].
 \end{equation}
\end{theorem}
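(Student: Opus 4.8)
The plan is to differentiate the mild equation \eqref{km} in the space variable and to reduce the whole statement to a single H\"older-type gradient estimate for the semigroup $P$. First I would record that, arguing exactly as in the first part of the proof of Theorem \ref{thm_wp} (differentiation under the integral sign, legitimate by \eqref{grad_est} and dominated convergence), for every $t\in ]0,T]$ and $x\in H$ one has
\[
Du(t,x) = DP_t h(x) + \int_0^t DP_{t-s}\big[\mathcal{H}(\cdot,Du(s,\cdot))\big](x)\,ds.
\]
Hence it suffices to prove the following claim: \emph{for every $\phi\in C_b(H)$, every $\tau>0$ and the fixed $\theta\in(0,1)$, the map $DP_\tau\phi\colon H\to H$ is $\theta$-H\"older continuous with $[DP_\tau\phi]_\theta\le C\,\tau^{-\gamma(1+\theta)}\,\|\phi\|_0$}, for some $C=C(\alpha,\gamma,\theta)>0$. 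Once this is available, applying it to $h$ and to $\mathcal{H}(\cdot,Du(s,\cdot))$ and inserting the bounds into the formula above yields \eqref{theta} and \eqref{est_thetaconstant}; note that $u(t,\cdot)\in C^1_b(H)$ is already known from Theorem \ref{thm_wp}, and $Du(t,\cdot)$ is bounded by $\|u\|_{C^1_\gamma}t^{-\gamma}$, so the H\"older estimate is exactly what upgrades $u(t,\cdot)$ to $C^{1,\theta}_b(H)$.

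For the claim I would exploit the semigroup property, writing $P_\tau\phi=P_{\tau/2}\psi$ with $\psi:=P_{\tau/2}\phi$. By \eqref{grad_est}, $\psi\in C^1_b(H)$, $\|\psi\|_0\le\|\phi\|_0$, and $\psi$ is Lipschitz continuous with $[\psi]_\text{Lip}=\|D\psi\|_0\le C(\tau/2)^{-\gamma}\|\phi\|_0$. Next I would invoke the representation of the G\^ateaux derivative of $P_{\tau/2}$ recalled in Subsection \ref{sub_prel}: since $\psi\in C_b(H)$,
\[
\langle DP_{\tau/2}\psi(x)-DP_{\tau/2}\psi(x'),h\rangle=\int_H \big[\psi(e^{\frac{\tau}{2}A}x+y)-\psi(e^{\frac{\tau}{2}A}x'+y)\big]\,J_{\tau/2}(h,y)\,\mu_{\tau/2}(dy),
\]
and, using that $(e^{uA})_{u\ge0}$ is a contraction on $H$, the Lipschitz continuity of $\psi$, the Cauchy--Schwarz inequality (with $\mu_{\tau/2}$ a probability measure) and $\int_H|J_{\tau/2}(h,y)|^2\mu_{\tau/2}(dy)\le C(\tau/2)^{-2\gamma}|h|^2$, I would obtain the ``Lipschitz gradient'' bound $|DP_{\tau/2}\psi(x)-DP_{\tau/2}\psi(x')|\le C(\tau/2)^{-\gamma}[\psi]_\text{Lip}\,|x-x'|$. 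Combining this with the trivial bound $|DP_{\tau/2}\psi(x)-DP_{\tau/2}\psi(x')|\le 2\|DP_{\tau/2}\psi\|_0\le C(\tau/2)^{-\gamma}\|\psi\|_0$ (again from \eqref{grad_est}) and interpolating — raising the first estimate to the power $\theta$ and the second to the power $1-\theta$ — gives
\[
|DP_{\tau/2}\psi(x)-DP_{\tau/2}\psi(x')|\le \frac{C}{(\tau/2)^{\gamma}}\,\|\psi\|_0^{1-\theta}\,[\psi]_\text{Lip}^{\theta}\,|x-x'|^\theta\le\frac{C}{\tau^{\gamma(1+\theta)}}\,\|\phi\|_0\,|x-x'|^\theta,
\]
which is the claim.

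Finally I would close the estimate for $Du(t,\cdot)$. The term $DP_t h$ is handled directly: $[DP_t h]_\theta\le C\,t^{-\gamma(1+\theta)}\|h\|_0$. For the convolution term, for each $s\in ]0,t[$ the function $\mathcal{H}(\cdot,Du(s,\cdot))$ belongs to $C_b(H)$ (by continuity of $\mathcal{H}$ and of $Du(s,\cdot)$) and, by \eqref{Ham_bound}, $\|\mathcal{H}(\cdot,Du(s,\cdot))\|_0\le L\,\|u\|_{C^1_\gamma}\,s^{-\gamma}+\|g\|_0$; hence the claim yields $\big[DP_{t-s}[\mathcal{H}(\cdot,Du(s,\cdot))]\big]_\theta\le C(t-s)^{-\gamma(1+\theta)}\big(L\|u\|_{C^1_\gamma}s^{-\gamma}+\|g\|_0\big)$. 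Integrating in $s$ and using that $\gamma(1+\theta)<1$ and $\gamma<1$ — so that $\int_0^t(t-s)^{-\gamma(1+\theta)}s^{-\gamma}\,ds$ and $\int_0^t(t-s)^{-\gamma(1+\theta)}\,ds$ are finite and equal, up to Beta function constants, to $t^{1-2\gamma-\gamma\theta}$ and $t^{1-\gamma-\gamma\theta}$ respectively — one bounds $[Du(t,\cdot)]_\theta$ by $t^{-\gamma(1+\theta)}$ times a constant depending only on $\|h\|_0,\|g\|_0,\|u\|_{C^1_\gamma},L,T,\gamma,\theta$, after absorbing the extra positive powers $t^{1-\gamma}\le T^{1-\gamma}$ and $t\le T$. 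This is \eqref{est_thetaconstant}, and \eqref{theta} follows. The only genuinely delicate point is the H\"older gradient claim, i.e.\ extracting a $\theta$-H\"older modulus for $DP_\tau\phi$ from a merely bounded continuous $\phi$ at the sharp rate $\tau^{-\gamma(1+\theta)}$; the splitting-and-interpolation argument above achieves this, and it is precisely this rate that forces the restriction $\gamma(1+\theta)<1$ when the estimate is reinserted into the convolution equation.
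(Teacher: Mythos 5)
Your argument is correct and reaches the same intermediate estimate as the paper, namely $[DP_\tau\phi]_\theta\le C\,\tau^{-\gamma(1+\theta)}\|\phi\|_0$ for $\phi\in C_b(H)$ (this is exactly \eqref{hol}), and the way you feed it back into the differentiated convolution equation, handle the singular integral via the Beta function and absorb the harmless factor $t^{1-\gamma}\le T^{1-\gamma}$, matches the paper's closing step \eqref{final_theta}. The difference lies entirely in how the key H\"older gradient estimate is obtained. The paper also splits $P_\tau=P_{\tau/2}P_{\tau/2}$, but it uses the splitting to produce a \emph{second}-derivative bound $\|D\langle DP_\tau\phi(\cdot),p\rangle\|_0\le C\tau^{-2\gamma}\|\phi\|_0|p|$ (see \eqref{second_der}--\eqref{dee}) and then invokes the abstract interpolation identity $(UC_b(H),UC^1_b(H))_{\theta,\infty}=C^{0,\theta}_b(H)$ from Da Prato--Zabczyk to convert the pair of bounds \eqref{grad_est} and \eqref{dee} into a $\theta$-H\"older modulus for $\langle DP_\tau\phi(\cdot),p\rangle$. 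You instead never touch second derivatives: you use the explicit Bismut-type representation $\langle DP_{\tau/2}\psi(x),h\rangle=\int_H\psi(e^{\frac{\tau}{2}A}x+y)J_{\tau/2}(h,y)\mu_{\tau/2}(dy)$ recalled in Subsection \ref{sub_prel} to get a Lipschitz modulus for $DP_{\tau/2}\psi$ when $\psi=P_{\tau/2}\phi$ is Lipschitz, and then interpolate the increment elementarily via $\min(a,b)\le a^\theta b^{1-\theta}$. Your route is more self-contained (no interpolation-space theory, only the kernel estimate $\int_H|J_t(h,y)|^2\mu_t(dy)\le Ct^{-2\gamma}|h|^2$ and the fact that $\mu_t$ is a probability measure, which justifies your Cauchy--Schwarz step), while the paper's route is shorter once one accepts the interpolation theorem and requires only the abstract gradient estimate \eqref{grad_est}, not the explicit derivative formula; both yield the same rate $\tau^{-\gamma-\gamma\theta}$ and hence the same constraint $\gamma+\gamma\theta<1$.
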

\begin{proof} Fix $t\in]0,T]$ and consider $\theta\in(0,1)$ such that $\gamma+\theta\gamma<1$.
	 For every $\phi\in C_b(H)$ and $k,p\in H$, denoting by $D^2_{kp}P_t\phi$ the Gâteaux derivative of $\langle DP_t\phi(\cdot), p\rangle$ along the direction $k$, by the semigroup property of $P$ (see also \eqref{solo_dopo} in the proof of Lemma \ref{lemma_contpart}) we infer that
\begin{align}\label{second_der}
 D^2_{kp}P_t\phi(x)= \langle D^{} P_{t/2}( \langle D^{}_{} P_{t/2} \phi (\cdot),  e^{\frac{t}{2}A}p \rangle) (x),k\rangle,\quad x\in H.
\end{align}
Then, by \eqref{grad_est}, since the $(e^{uA})_{u\ge0}$ is a contraction semigroup on $H$,
\begin{align} \label{dee}
\|  D \langle DP_t \phi(\cdot),p\rangle \|_0 \le \frac{C}{t^{2\gamma}} \| \phi\|_0 |p|.
\end{align}
 By  \cite[Theorem 2.3.3]{DZsecond} (see also the monograph \cite{LU}), the following characterization for the interpolation space $(UC_b(H) , UC^1_b(H))_{\theta, \infty} $ holds:
\begin{equation*} \label{inter}
(UC_b(H) , UC^1_b(H))_{\theta, \infty} = C^{0,\theta}_b(H).
\end{equation*}
Here, $UC_b(H)$ is the Banach space of uniformly continuous bounded $\R-$valued maps, and $UC^1_b(H)$ is the space of functions in $UC_b(H)$ with uniformly continuous and bounded Fréchet derivative.
Since $\langle D P_t \phi(\cdot),p\rangle  \in UC_b^1 (H)$, \cite[Example 2.3.4]{DZsecond}, \eqref{grad_est} and \eqref{dee} ensure that, for a constant $C_1=C_1(\theta, \alpha,\gamma,T)>0$ allowed to change from line to line,
 \begin{align} \label{hol}\notag
 | \langle D P_t \phi (x) - DP_t \phi(y),p\rangle|&\le \|
 \langle DP_t\phi(\cdot),p\rangle
 \|_{C_b^{0,\theta}}
 |x-y|^{\theta}
 \\&\notag
 \le  {C_1}\frac{1}{t^{\gamma(1-\theta)}}\left(\frac1{t^{\gamma\theta}}
 +
 \frac{1}{t^{2\gamma\theta}}
 \right)|p|\| \phi\|_0|x-y|^{\theta}\\
 &\le {C_1}
 \frac{|p|}{t^{\gamma+\gamma\theta}}
 \| \phi\|_0|x-y|^{\theta}, \quad  x,y \in H.
\end{align}
 Differentiating  \eqref{km}, by \eqref{Ham_bound} and \eqref{hol} we compute, for every $x,y\in H$,
\begin{align}\label{final_theta}
	\notag& |\langle D u(t,x) - D u(t,y),p\rangle| \le   |\langle D P_t h (x) - D P_t h(y), p\rangle|\\
	&\notag
	\quad \quad + \int_0^t    |\langle D P_{t-s} [\H (\cdot ,
	D u(s, \cdot ))]  (x) - DP_{t-s} [\H (\cdot ,
	D u(s, \cdot ))]  (y), p \rangle|\, ds\\
	&\quad
	\le C_1\left(\int_0^t\left( L\frac{\|u\|_{C^1_\gamma}}{s^{\gamma}}+\|g\|_0\right) \frac{1}{(t-s)^{\gamma + \gamma\theta }} ds
+
	\frac{1}{t^{\gamma+\gamma\theta}}
	\| h\|_0\right)|p||x-y|^{\theta}.
\end{align}
Taking the supremum over $p\in H$ such that $|p|\le 1$, this estimate shows that $Du(t, \cdot)$ is $\theta-$H\"older continuous from $H$ into $H$  (i.e., \eqref{theta}). By \cite[Equation (2.12)]{Bo_iter}, \eqref{est_thetaconstant} readily follows from \eqref{final_theta}, as well. Given that $t\in]0,T]$ is arbitrary, the proof is complete.
\end{proof}

}

\vskip 4mm  \noindent {\bf Acknowledgments.}  The authors would like to thank Giuseppe Da Prato for his original work on control theory and SPDEs, which has greatly influenced them.

\end{document}